\newcommand{\model}{\Mcal}
\newcommand{\real}{\RR}
\newenvironment{proof}{\par\noindent{\bf Proof\ }}{\hfill\BlackBox\\[2mm]}
\title{Normalized Maximum Likelihood with Luckiness for Multivariate Normal Distributions}
\author{Kohei Miyaguchi\footnotemark}
\begin{document}
\maketitle
\footnotetext{Graduate Schoold of Information Science and Technology, The University of Tokyo\\
kohei\_miyaguchi@mist.i.u-tokyo.ac.jp}

\begin{abstract}
The normalized maximum likelihood (NML) is one of the most important distribution in coding theory and statistics.
NML is the unique solution (if exists) to the pointwise minimax regret problem.
However, NML is not defined even for simple family of distributions such as the normal distributions.
Since there does not exist any meaningful minimax-regret distribution for such case,
it has been pointed out that \emph{NML with luckiness} (LNML) can be employed as an alternative to NML.
In this paper, we develop the closed forms of LNMLs for multivariate normal distributions.
\end{abstract}

\section{Introduction}

The normalized maximum likelihood (NML) is known to be the minimax optimal code length for
data compression or statistical model selection
with respect to the MDL principle \cite{rissanen1984universal}.
However, for some distribution class including the normal distributions,
NML is not well-defined.
Restricting sample spaces can solve the problem \cite{hirai2011efficient},
but it has an intrinsic flaw that samples can fall outside of any valid restrictions.
We will introduce a treatment for this issue, NML with luckiness (LNML),
and give an exact formula for LNML of the multivariate normal distributions.

The rest of the paper is organized as follows:
We first introduce NML and its optimality in Section~2.
Then we present the notion of LNML as an extension of NML in Section~3.
The main theorems about LNMLs of multivariate normal distributions are shown in Section~4.
Finally, in Section~5,  we discuss on the choice of the hyperparameters and future perspective. 

\section{Normalized Maximum Likelihood (NML)}
Let $\model$ be a set of probability density (or mass) functions, which we call a \emph{(statistical) model}.
The NML relative to a statistical model $\model$ is given by
\begin{align*}
    \bar{p}_n(x^n)\eqdef\frac{\max_{p\in \model^n} p(x^n)}{C(\model^n)},
\end{align*}
where $x^n=\cbr{x_i}_{i=1}^{n}$ and $C(\cdot)$ denotes the normalizing constant
$C(\model^n)=\int \max_{p\in \model^n} p(x^n) d\mu(x^n)$.
Here $\model^n$ denotes the model of $n$ i.i.d. observations drawn from a density in model $\model$.
The normalizing constant $C(\cdot)$ is also referred to as
the capacity function
because of intuitions from the theory of channel capacity \cite{rissanen2012optimal}.
It is known that the NML is the unique distribution that
attains the minimax point-wise regret on description lengths
with respect to the model $\model$ \cite{shtar1987universal}
\begin{align*}
    \max_{p\in \model, x^n\in\mathcal{X}^n} R(\bar{p};p,x^n)
    &= \min_{q:\int q = 1} \max_{p\in\model, x^n\in\mathcal{X}^n} R(q;p,x^n),
\end{align*}
where $R(q;p,x^n)\eqdef \log \frac{p(x^n)}{q(x^n)}$, if the minimum exists.

However, not a few of NMLs are difficult to obtain their closed form,
and on the other hand demanding to compute numerically due to the integral of capacity function.
Though an asymptotic approximation up to the constant term has been given by \cite{rissanen1996fisher},
there are still two major problems.
One is the problem of small samples such that
the approximation is valid only for sufficiently large sample and that
it is not known how many samples we need for the accurate approximation.
The other is the problem of improper capacities such that
the integral of the capacity is not finite and that the NML does not exist.
One can sidestep the former problem exploiting
a formula (e.g. \cite{kontkanen2007linear}) for calculating exact NMLs.
However the latter problem is inevitable
since there does not exist any target values.

\section{NML with Luckiness (LNML)}
Since NMLs may be improper (e.g. their capacities are undefined),
a number of treatments such as
restriction on the range of data and restriction on the range of parameters
has been considered so far.
In a previous study \cite{hirai2011efficient}, on exponential families of distributions,
exact formulae for NMLs with a variable restricted range of data,
$Y(\eta)$, were presented.
However there is a flaw on such restriction technique that it is impossible to
design $Y(\eta)$ to include any unseen data.

NML with luckiness (LNML) \cite{grunwald2007minimum} is another approach for solving the problem of improper NMLs.
It is a general solution that includes as special cases
the restriction on the range of the parameters and
the conditional NML.
One of the biggest advantage of LNMLs over restriction of the range of data is
that it is well-defined regardless of the support of data.
A LNML density is relative to a model $\model$ and luckiness $\pi:\model\to(0,\infty)$,
\begin{align*}
    p_n^{\rm LNML}(x^n) \eqdef
    \frac{\max_{p\in \model^n}p(x^n)\pi(p)}{C(\model^n, \pi)},
\end{align*}
where $C(\model^n, \pi)=\int \max_{p\in \model^n}p(x^k)\pi(p)d\mu(x^n)$ denotes
the normalizing constant.
Here we assume that the maximum exists.
If $\pi(p)=1$ for all $p\in\model$, then the LNML coincides with the NML.
On the other hand one can avoid the problem of infinite integrals by
choosing $\pi$ properly. Note that there are two variants (LNML1 and LNML3) except the above LNML,
as known as LNML2, but here we focus on LNML2 for the sake of simplicity.
See \cite{grunwald2007minimum} for the detailed definitions and differences.

Another remarkable property of LNMLs is tilted minimax optimality.
The LNML uniquely achieves the minimax pointwise \emph{tilted regret},
\begin{align*}
    \max_{p\in \model, x^n\in\mathcal{X}^n} R_\pi(p_n^{\rm LNML};p,x^n)
    &= \min_{q:\int q=1} \max_{p\in \model, x^n\in\mathcal{X}^n} R_\pi(q;p,x^n),
\end{align*}
where $R_\pi(q;p,x^n)=\log\frac{p(x^n)\pi(p)}{q(x^n)}$ denotes a tilted regret function.
This is a straight extension of the previous minimax regret problem.
Constant $\pi(=c)$ implies zero subjectivity on the regret, hence
NML is the minimax optimal with respect to the completely objective regret $R_c(q;p,x^n)$.
On the other hand LNMLs are minimax optimal with respect to their subjective regrets $R_\pi(q;p,x^n)$.
Therefore luckiness $\pi$ embodies one's subjectivity on the true distribution
in minimax encoding, and it is necessary to avoid the infinite capacity problem.

Now let $\model^k_\pi$ be a set of non-negative functions $\myset{p^k(\cdot)\pi(p)}{p\in \model}$.
Then, LNML relative to $(\model, \pi)$ can be written with the same notation as ordinary NMLs,
\begin{align*}
    p_n^{\rm LNML}(x^n) = \bar{p}_n(x^n) = \max_{p\in \model^n_\pi}\frac{p(x^n)}{C(\model^n_\pi)}.
\end{align*}
This is a natural generalization of NMLs on unnormalized density functions.
Moreover, by extending the domain of $\pi$ to any probability densities
such that $\pi(p)=0\;(\forall p\notin\model)$,
$\pi$ can be seen equivalent to the model $\model_\pi$ itself.

Henceforth we omit $\pi$ and regard $\model=\model_\pi$ as the unnormalized statistical model.
Given any unnormalized models $\model$, one can recover their luckiness $\pi$;
Let $\pi_{\model}$ denote the extended luckiness function relative to model $\model$,
\begin{align*}
    \pi_\model(p)\eqdef\sup \myset{\alpha\in[0, 1]}{\alpha p\in\model\cup\cbr{0}},
\end{align*}
for all probability densities $p$.
Conversely, luckiness $\pi_\model$ also has sufficient information for recovering the original model $\model$.

\section{LNML for Multivariate Normal Distributions}
In this section we give two formulae for LNMLs
relative to the $m$-dimensional normal distributions with
both mean $\mu\in\mathbb{R}^m$ and variance $\Sigma(>0)\in\mathbb{R}^{m\times m}$ unknown,
whose density function is as follows:
\begin{align*}
    f(x^n;\mu,\Sigma) &\eqdef
    \frac1{(2\pi)^{\frac{mn}2}|\Sigma|^{\frac{n}2}} \exp\cbr{
        -\frac1{2}\sum_{i=1}^{n} (x_i-\mu)^\top\Sigma^{-1}(x_i-\mu)
    }.
\end{align*}
Here, $\Sigma>0$ denotes that $\Sigma$ is positive definite.
First, we start from a simple case,
where the luckiness is given by
\begin{align}
    \pi(\mu, \Sigma;\nu,\sigma^2,\rho^2)=\frac1{(2\pi)^{\frac{m\nu}2}|\Sigma|^{\frac{\nu}2}}\exp\sbr{
        -\frac{\nu}{2}\tr\cbr{\Sigma^{-1} (\sigma^2I_m +\rho^2 \mu\mu^\top)}
    }, \label{eq:luckiness}
\end{align}
for $\nu > m-1$ and $\sigma^2,\rho^2 >0$.

\begin{theorem}[Simple luckiness]
    Let $\model^n$ be the model of $n$ independent observations drawn from an $m$-dimensional normal distribution
    with luckiness $\pi$ given by (\ref{eq:luckiness}).
    Then, the capacity of $\model^n$ is calculated as
    \begin{align*}
        C(\model^n) &= \mathcal{C}(m, n, \nu, \sigma^2, \rho^2)\\
        &\eqdef \frac1{\sigma^{m\nu}}
        \rbr{\frac{(n+\nu)^{n+\nu}\rbr{1+\frac{n}{\rho^2\nu}}}{(2e)^{n+\nu}(\pi\nu)^\nu}}^{\frac{m}2}
        \frac{\Gamma_m(\frac{\nu}2)}
        {\Gamma_m(\frac{n+\nu}2)},
    \end{align*}
    where $\Gamma_m(z)=\pi^{\frac{m(m-1)}4}\prod_{j=0}^{m-1}\Gamma(z-\frac{j}2)$ denotes
    the multivariate gamma function.
    Correspondingly, the LNML relative to $\model^n$ is given by
    \begin{align*}
        \bar{p}_n(x^n)
        &= \frac
        {f(x^n;\bar{\mu}_n,\bar\Sigma_n)
        \pi(\bar{\mu}_n,\bar\Sigma_n;\nu,\sigma^2,\rho^2)}
        {\mathcal{C}(n, m, \nu, \sigma^2,\rho^2)}\\
        &=
        \rbr{
            \frac
            {\nu^{\nu}}
            {\pi^n(n+\nu)^{n+\nu}(1+\frac{n}{\rho^2\nu})}
        }^{\frac{m}2}
        \frac
        {\Gamma_m(\frac{n+\nu}2)}
        {\Gamma_m(\frac{\nu}2)}
        \frac{\sigma^{m\nu}}{|\bar\Sigma_n|^{\frac{n+\nu}2}},
    \end{align*}
    where $\bar{\mu}_n=\frac{1}{n+\rho^2 \nu}\sum_{i=1}^{n} x_i$ and
    $\bar{\Sigma}_n=\frac1{n+\nu}(\sum_{i=1}^{n} x_ix_i^\top +\nu\sigma^2I_m)-\frac{1}{(\nu+n)(\rho^2\nu+n)}\sum_{i,j=1}^{n}x_ix_j^\top$
    denote the maximum a-posteriori probability (MAP) estimates under the prior proportional to $\pi$.
    \label{thm:LNML_basic}
\end{theorem}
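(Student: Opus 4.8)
The plan is to treat the luckiness $\pi$ as a conjugate (normal--inverse-Wishart) prior, compute the MAP numerator in closed form, and then reduce the capacity integral to a matrix-variate beta integral. First I would form the product $f(x^n;\mu,\Sigma)\,\pi(\mu,\Sigma;\nu,\sigma^2,\rho^2)$ and read off its exponent as $-\tfrac12\bigl[\sum_i(x_i-\mu)^\top\Sigma^{-1}(x_i-\mu)+\nu\,\tr\{\Sigma^{-1}(\sigma^2 I_m+\rho^2\mu\mu^\top)\}\bigr]$ together with the prefactor $(2\pi)^{-m(n+\nu)/2}|\Sigma|^{-(n+\nu)/2}$. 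Completing the square in $\mu$ collects the quadratic form $(n+\rho^2\nu)\mu^\top\Sigma^{-1}\mu-2\mu^\top\Sigma^{-1}\sum_i x_i$, whose minimizer is exactly $\bar\mu_n=\tfrac1{n+\rho^2\nu}\sum_i x_i$; substituting back turns the whole exponent into $-\tfrac12\tr(\Sigma^{-1}M)$ with $M=(n+\nu)\bar\Sigma_n$. Maximizing the remaining $|\Sigma|^{-(n+\nu)/2}\exp\{-\tfrac12\tr(\Sigma^{-1}M)\}$ over positive-definite $\Sigma$ is the standard determinant--trace optimization, giving $\Sigma=M/(n+\nu)=\bar\Sigma_n$; since $\tr(\bar\Sigma_n^{-1}(n+\nu)\bar\Sigma_n)=m(n+\nu)$, the numerator collapses to $(2\pi e)^{-m(n+\nu)/2}|\bar\Sigma_n|^{-(n+\nu)/2}$. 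Note that $\bar\Sigma_n\succ0$ for every $x^n$ because of the $\nu\sigma^2 I_m$ term, so the maximum always exists --- which is precisely what the luckiness buys us.

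The remaining task is the capacity $C(\model^n)=(2\pi e)^{-m(n+\nu)/2}\int|\bar\Sigma_n|^{-(n+\nu)/2}\,dx^n$. Writing $X=[x_1,\dots,x_n]\in\RR^{m\times n}$ I would use $(n+\nu)\bar\Sigma_n=XPX^\top+\nu\sigma^2 I_m$ (so that $|\bar\Sigma_n|^{-(n+\nu)/2}$ contributes a factor $(n+\nu)^{m(n+\nu)/2}$), with $P=I_n-\tfrac1{n+\rho^2\nu}\one_n\one_n^\top$, a positive-definite $n\times n$ matrix whose determinant is $|P|=\rho^2\nu/(n+\rho^2\nu)$. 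Factoring $P=LL^\top$ and substituting $Y=XL$ (so $dx^n=|P|^{-m/2}\,dY$) reduces the integral to $|P|^{-m/2}\int_{\RR^{m\times n}}|YY^\top+\nu\sigma^2 I_m|^{-(n+\nu)/2}\,dY$, and the factor $|P|^{-m/2}=(1+\tfrac{n}{\rho^2\nu})^{m/2}$ already accounts for the $\rho^2$ dependence in the stated capacity.

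The heart of the computation is this last integral, which I would evaluate by pushing Lebesgue measure forward under $Y\mapsto W=YY^\top$: for $n\ge m$ the image measure on the positive-definite cone is $\tfrac{\pi^{mn/2}}{\Gamma_m(n/2)}|W|^{(n-m-1)/2}\,dW$ (the Jacobian underlying the Wishart density). This turns the problem into the type-II matrix beta integral $\int_{W\succ0}|W|^{(n-m-1)/2}|W+\nu\sigma^2 I_m|^{-(n+\nu)/2}\,dW$, which after the scaling $W=\nu\sigma^2 V$ equals $(\nu\sigma^2)^{-m\nu/2}\Gamma_m(n/2)\Gamma_m(\nu/2)/\Gamma_m((n+\nu)/2)$; convergence here is exactly the hypothesis $\nu>m-1$ (and $n>m-1$), and the $\Gamma_m(n/2)$ cancels the one from the pushforward, leaving the ratio $\Gamma_m(\nu/2)/\Gamma_m((n+\nu)/2)$ and the power $\sigma^{-m\nu}$ seen in the theorem.

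Finally I would reassemble the constants: multiplying $(2\pi e)^{-m(n+\nu)/2}$, $(n+\nu)^{m(n+\nu)/2}$, $(1+\tfrac{n}{\rho^2\nu})^{m/2}$, $\pi^{mn/2}$, and $(\nu\sigma^2)^{-m\nu/2}$, and collecting everything into a single $m/2$ power, the $\pi$- and $2$-powers combine to $(2e)^{-(n+\nu)}(\pi\nu)^{-\nu}$ exactly as claimed; dividing the numerator by this capacity then yields the displayed LNML after the same bookkeeping. The one genuine obstacle is the matrix integral step --- getting the multivariate-gamma normalizer and the exponent shifts right --- so I would anchor it on the known Wishart and matrix-beta identities rather than rederive them, and separately handle the edge regime $n<m$ (where $YY^\top$ is singular and the pushforward formula must be modified) or simply note that the final closed form extends by analytic continuation in $n$.
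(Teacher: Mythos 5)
Your proposal is correct, but it reaches the capacity by a genuinely different route than the paper. After the step you share with the paper---reducing the maximized numerator at $(\bar\mu_n,\bar\Sigma_n)$ to $(2\pi e)^{-m(n+\nu)/2}\abs{\bar\Sigma_n}^{-(n+\nu)/2}$, which the paper asserts without your explicit completion of the square and determinant--trace argument---the paper evaluates $\int\abs{\bar\Sigma_n}^{-(n+\nu)/2}dx^n$ \emph{sequentially}: it uses the rank-one recursion $\bar\Sigma_n=\frac{\nu+n-1}{\nu+n}\bar\Sigma_{n-1}+\frac{\rho^2\nu+n-1}{(\nu+n)(\rho^2\nu+n)}(x_n-\bar\mu_{n-1})(x_n-\bar\mu_{n-1})^\top$ with the matrix determinant lemma, integrates out $x_n$ against the normalizing constant of a multivariate $t$-distribution, and telescopes the resulting product. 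You instead perform a single global reduction: $(n+\nu)\bar\Sigma_n=XPX^\top+\nu\sigma^2 I_m$ with $P=I_n-\frac1{n+\rho^2\nu}\one\one^\top$, the Jacobian factor $\abs{P}^{-m/2}=\rbr{1+\frac{n}{\rho^2\nu}}^{m/2}$, the Wishart pushforward $Y\mapsto W=YY^\top$, and the type-II matrix-beta integral with $a=\frac n2$, $b=\frac\nu2$. Your constant bookkeeping checks out (in particular $(2\pi e)^{-m(n+\nu)/2}\pi^{mn/2}=(2e)^{-m(n+\nu)/2}\pi^{-m\nu/2}$, the net power $(\nu\sigma^2)^{-m\nu/2}$, and the surviving ratio $\Gamma_m(\frac\nu2)/\Gamma_m(\frac{n+\nu}2)$ with convergence exactly at $\nu>m-1$). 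The trade-off: your one-shot evaluation makes the role of the hypothesis $\nu>m-1$ structurally transparent and avoids the recursion, while the paper's peeling argument needs no case distinction between $n\ge m$ and $n<m$ (each step only requires $\nu+i-m>0$) and yields the sequential decomposition of Corollary~\ref{col:decomposition} as an immediate byproduct, which your global computation does not.

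The one soft spot is your handling of $n<m$: ``analytic continuation in $n$'' is not literally available, since $n$ is also the dimension of the integration domain. The repair stays entirely within your framework, though. For $n<m$ use Sylvester's identity $\abs{YY^\top+cI_m}=c^{m-n}\abs{Y^\top Y+cI_n}$ and push forward to the $n\times n$ Gram matrix $Y^\top Y$, whose image measure is $\frac{\pi^{mn/2}}{\Gamma_n(m/2)}\abs{W}^{(m-n-1)/2}dW$; the scaling $W=\nu\sigma^2 V$ again produces the net power $(\nu\sigma^2)^{-m\nu/2}$, and the beta integral now has parameters $a=\frac m2$, $b=\frac{n+\nu-m}2$, convergent again precisely when $\nu>m-1$. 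The cancellation identity $\Gamma_n(\frac{n+\nu-m}2)/\Gamma_n(\frac{n+\nu}2)=\Gamma_m(\frac\nu2)/\Gamma_m(\frac{n+\nu}2)$ (both sides telescope to the same ratio of ordinary gamma factors) then restores the stated closed form, so the theorem holds uniformly in $n$ under your approach as well.
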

\begin{proof}
    It is suffice to show the closed form of the capacity $C(\model^n)$.
    By the definition of the capacity, we have
    \begin{align}
        C(\model^n)
        &= \int \max_{\mu\in\real^m,\Sigma>0}
        f(x^n;\mu, \Sigma)\pi(\mu,\Sigma;\nu,\sigma^2, \rho^2) dx^n
        \nonumber
        \\&= \int f(x^n;\bar\mu_n, \bar\Sigma_n)\pi(\bar\mu_n,\bar\Sigma_n;\nu,\sigma^2,\rho^2) dx^n
        \nonumber
        \\&= \frac1{(2\pi e)^{\frac{m(\nu+n)}2}}\int \frac1{\abs{\bar{\Sigma}_n}^{\frac{\nu+n}2}} dx^n.
        \label{eq:capacity_simple_reduction}
    \end{align}
    On the other hand, we have a recursive expression of $\bar\Sigma_n$ such that
    \begin{align*}
      \bar\Sigma_n &= \frac{\nu+n-1}{\nu+n}\bar\Sigma_{n-1}
      + \frac{\rho^2\nu+n-1}{(\nu+n)(\rho^2\nu+n)}(x_n-\bar\mu_{n-1})(x_n-\bar\mu_{n-1})^\top.
    \end{align*}
    Therefore, using the matrix determinant lemma,
    the last integral \eqref{eq:capacity_simple_reduction} satisfies the following recursive formula,
    \begin{align*}
        \int \frac1{\abs{\bar{\Sigma}_n}^{\frac{\nu+n}2}} dx^n
        &=
        \int \frac1{\abs{\frac{\nu+n}{\nu+n}\bar{\Sigma}_n
        }^{\frac{\nu+n}2}
        } dx^n
        \\&=
        \int \frac1{\abs{\frac{\nu+n-1}{\nu+n}\bar{\Sigma}_{n-1}}^{\frac{\nu+n}2}}
        \frac{1}{\cbr{1+c_{n-1}(x_{n}-\bar{\mu}_{n-1})^\top \bar\Sigma_{n-1}^{-1}(x_{n} -\bar{\mu}_{n-1})}^{\frac{\nu+n}2}}
        dx^n
        \\&=
        \frac{\pi^{\frac{m}2} \Gamma(\frac{\nu+n-m}2)
        }{\rbr{\frac{\nu+n-1}{\nu+n}c_{n-1} }^{\frac{m}2}\Gamma(\frac{\nu+n}2)}
        \int \frac{1}{\abs{\frac{\nu+n-1}{\nu+n}\bar\Sigma_{n-1}}^{\frac{\nu+n-1}2}}
        dx^{n-1},
    \end{align*}
    where $c_k=\frac{\rho^2\nu+k}{(\nu+k)(\rho^2\nu+k+1)}$.
    In the last equation, we exploits the normalizing factor of the multivariate $t$-distributions.
    Therefore, we have
    \begin{align*}
        \int \frac1{\abs{\bar{\Sigma}_n}^{\frac{\nu+n}2}} dx^n
        &= \frac{1}{\abs{\frac{\nu}{\nu+n}\bar\Sigma_0}^{\frac{\nu}2}}
        \prod_{i=1}^n \frac{
        \pi^{\frac{m}2} \Gamma(\frac{\nu+i-m}2)
        }{\rbr{\frac{\nu+i-1}{\nu+n}c_{i-1} }^{\frac{m}2} \Gamma(\frac{\nu+i}2)
        }
        \\&= \rbr{\frac{\nu+n}{\nu\sigma^2}}^{\frac{m\nu}2}
        \prod_{i=1}^n \frac{\Gamma(\frac{\nu+i-m}2) \pi^{\frac{m}2}
        }{ \rbr{\frac{1}{\nu+n}\frac{\rho^2\nu+i-1}{\rho^2\nu+i} }^{\frac{m}2}\Gamma(\frac{\nu+i}2)}
        \\&= \frac{\Gamma_m(\frac{\nu}2)\pi^{\frac{mn}2}(\nu+n)^{\frac{m(\nu+n)}{2}} \rbr{1+\frac{n}{\rho^2\nu}}^{\frac{m}2}
        }{\sigma^{m\nu}\Gamma_m(\frac{\nu+n}2)\nu^{\frac{m\nu}2}
        },
    \end{align*}
    which, combined with \eqref{eq:capacity_simple_reduction}, yields the formula of the capacity.
\end{proof}

Now, we derive an LNML with more flexible luckiness in terms of location and scale.
Since the space of normal distributions is closed with respect to affine transformations on data
and the capacity is invariant to any reparametrization of data,
\begin{align*}
    {\model'}^n &\eqdef
    \myset{f'(y^n;\mu,\Sigma)\pi(\mu, \Sigma;\nu, \sigma^2,\rho^2)}{
        \mu\in\real^m,\Sigma\in\real^{m\times m},\Sigma>0
    },
\end{align*}
where $y_i=A^{-1} x_i+\mu_0\;(0\le i<n)$ denotes a bijective affine mapping and $f'(;\mu,\Sigma)$
denotes the probability density function of $y^n$,
has the same capacity $C(\model'^n)=\Ccal(n,m,\nu,\sigma^2,\rho^2)$.
This yields the following theorem.

\begin{theorem}[Luckiness with location and scale parameters]
  \label{thm:LNML_location_scale}
    Let $\model^n$ be the multivariate normal distributions of $n$ observations
    with the luckiness given by
    \begin{align*}
        \pi(\mu, \Sigma; \nu, \mu_0, \Sigma_0, \rho^2)\eqdef
        \frac1{(2\pi)^{\frac{m\nu}2}|\Sigma|^{\frac{\nu}2}}\exp\sbr{
            -\frac{\nu}{2}\tr\cbr{\Sigma^{-1} \rbr{\Sigma_0+\rho^2(\mu-\mu_0)(\mu-\mu_0)^\top}}
        },
    \end{align*}
    where $\mu_0\in\real^{m}$ and $\Sigma_0(>0)\in\real^{m\times m}$.
    Therefore the capacity is calculated as
    \begin{align*}
        C(\model^n)=\mathcal{C}(n, m, \nu, |\Sigma_0|^{1/m}, \rho^2).
    \end{align*}
    The corresponding LNML is also given by
    \begin{align*}
        \bar{p}_n(x^n) &= \frac{
            f(x^n;\bar{\mu}_n,\bar\Sigma_n)
            \pi(\bar{\mu}_n,\bar\Sigma_n; \nu, \mu_0, \Sigma_0, \rho^2)
        }{\mathcal{C}(n, m, \nu, |\Sigma_0|^{1/m},\rho^2)}\\
        &=
        \rbr{
            \frac
            {\nu^{\nu}}
            {\pi^n(n+\nu)^{n+\nu}(1+\frac{n}{\rho^2\nu})}
        }^{\frac{m}2}
        \frac
        {\Gamma_m(\frac{n+\nu}2)}
        {\Gamma_m(\frac{\nu}2)}
        \frac{|\Sigma_0|^{\frac\nu2}}{|\bar\Sigma_n|^{\frac{n+\nu}2}}.
    \end{align*}
    where $\bar{\mu}_n=\mu_0+\frac{1}{n+\rho^2 \nu}\sum_{i=1}^{n} (x_i-\mu_0)$ and
    $\bar{\Sigma}_n=\frac1{n+\nu}(\sum_{i=1}^{n} (x_i-\mu_0)(x_i-\mu_0)^\top +\nu\Sigma_0)-\frac{1}{(\nu+n)(\rho^2\nu+n)}\sum_{i,j=1}^{n}(x_i-\mu_0)(x_j-\mu_0)^\top$
    denote the maximum a-posteriori probability (MAP) estimates under the prior proportional to $\pi$.
\end{theorem}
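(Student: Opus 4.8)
The plan is to reduce Theorem~\ref{thm:LNML_location_scale} to Theorem~\ref{thm:LNML_basic} by an affine change of variables on the data, formalizing the affine-equivariance remark made just before the statement. Concretely, I would set $\sigma^2=|\Sigma_0|^{1/m}$ and take $A$ to be the (symmetric, for definiteness) positive-definite matrix with $AA^\top=\Sigma_0/\sigma^2$, so that $|\det A|^2=|\Sigma_0|/\sigma^{2m}=1$, i.e.\ $|\det A|=1$. I would then introduce the substitution $x_i=Az_i+\mu_0$ together with the induced reparametrization $\tilde\mu=A^{-1}(\mu-\mu_0)$, $\tilde\Sigma=A^{-1}\Sigma A^{-\top}$, which is a bijection of $\real^m\times\{\Sigma>0\}$ onto itself since congruence by an invertible $A$ preserves positive definiteness.

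First I would verify that the two integrand factors map to their simple-luckiness counterparts. The Mahalanobis form is invariant, $(x_i-\mu)^\top\Sigma^{-1}(x_i-\mu)=(z_i-\tilde\mu)^\top\tilde\Sigma^{-1}(z_i-\tilde\mu)$, because $A^\top\Sigma^{-1}A=\tilde\Sigma^{-1}$, and $|\Sigma|=|\det A|^2|\tilde\Sigma|$; hence $f(x^n;\mu,\Sigma)=|\det A|^{-n}\,g(z^n;\tilde\mu,\tilde\Sigma)$ with $g$ the normal density in the $z$-coordinates. For the luckiness the two trace contributions separate: $\tr\{\Sigma^{-1}\rho^2(\mu-\mu_0)(\mu-\mu_0)^\top\}=\rho^2\tr(\tilde\Sigma^{-1}\tilde\mu\tilde\mu^\top)$ by cyclicity, while $\tr(\Sigma^{-1}\Sigma_0)=\tr(\tilde\Sigma^{-1}A^{-1}\Sigma_0A^{-\top})=\sigma^2\tr(\tilde\Sigma^{-1})$ by the defining relation $A^{-1}\Sigma_0A^{-\top}=\sigma^2I_m$. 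Combined with $|\Sigma|^{\nu/2}=|\det A|^{\nu}|\tilde\Sigma|^{\nu/2}$, this shows $\pi(\mu,\Sigma;\nu,\mu_0,\Sigma_0,\rho^2)=|\det A|^{-\nu}\,\pi(\tilde\mu,\tilde\Sigma;\nu,\sigma^2,\rho^2)$, exactly the simple luckiness of \eqref{eq:luckiness} evaluated at $(\tilde\mu,\tilde\Sigma)$.

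Next I would assemble the capacity. Because the reparametrization is a bijection, $\max_{\mu,\Sigma}f\pi=|\det A|^{-(n+\nu)}\max_{\tilde\mu,\tilde\Sigma} g\,\pi(\tilde\mu,\tilde\Sigma;\nu,\sigma^2,\rho^2)$, and the substitution contributes a Jacobian $dx^n=|\det A|^{n}\,dz^n$. Collecting the determinant powers gives $C(\model^n)=|\det A|^{-\nu}\,\mathcal{C}(n,m,\nu,\sigma^2,\rho^2)$, and since $|\det A|=1$ by our normalization this equals $\mathcal{C}(n,m,\nu,|\Sigma_0|^{1/m},\rho^2)$, as claimed. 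One could equally keep $|\det A|\neq1$ and absorb the residual $|\det A|^{-\nu}$ into the $\sigma^{-m\nu}$ prefactor of $\mathcal{C}$, recovering the same value; the choice $|\det A|=1$ merely makes the cancellation transparent.

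Finally, the LNML follows by substituting the capacity into $\bar p_n(x^n)=f(x^n;\bar\mu_n,\bar\Sigma_n)\pi(\bar\mu_n,\bar\Sigma_n;\cdots)/C(\model^n)$. The stated estimates are the pullbacks $\bar\mu_n=A\tilde\mu_n+\mu_0$ and $\bar\Sigma_n=A\tilde\Sigma_nA^\top$ of the simple-case maximizers, which I would check termwise: the congruence turns each $z_iz_j^\top$ back into $(x_i-\mu_0)(x_j-\mu_0)^\top$, while $\nu\sigma^2AA^\top=\nu\Sigma_0$ converts the ridge term $\nu\sigma^2I_m$ into $\nu\Sigma_0$. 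The one step demanding care is the trace identity $\tr(\Sigma^{-1}\Sigma_0)=\sigma^2\tr(\tilde\Sigma^{-1})$, since it is precisely this requirement that forces $AA^\top=\Sigma_0/\sigma^2$; everything else is bookkeeping of determinant factors.
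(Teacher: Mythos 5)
Your proof is correct and takes essentially the same route as the paper: both reduce the theorem to Theorem~\ref{thm:LNML_basic} via a unit-determinant affine reparametrization of the data ($x=Az+\mu_0$ with $AA^\top=\Sigma_0/\sigma^2$ and $\sigma^2=|\Sigma_0|^{1/m}$), using the affine equivariance of the normal family together with the invariance of the capacity, and then pulling back the MAP estimates. The only differences are cosmetic: the paper writes the map in the opposite direction (choosing $\Sigma_0=\sigma^2 A^{-1}(A^\top)^{-1}$ with $|A|=1$) and argues that the transformed function class coincides with $\model^n$, whereas you track the Jacobian and determinant factors explicitly, which makes the cancellation more transparent but is the same argument.
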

\begin{proof}
    Taking the scale $\sigma$ and the transformation $A$ such that $\Sigma_0=(\sigma A^{-1})(\sigma A^{-1})^\top$ and that $|A|=1$, we have
    \begin{align*}
      f'(y^n;\mu,\Sigma)\pi(\mu,\Sigma;\nu,\sigma^2, \rho^2)
      &=
      f(y^n;A^{-1}\mu+\mu_0,A^{-1}\Sigma (A^\top)^{-1})
      \pi(\mu,\Sigma;\nu,\sigma^2, \rho^2)
      \\&=
      f(y^n;\mu',\Sigma') \pi(A(\mu'-\mu_0), A\Sigma' A^\top; \nu, \sigma^2, \rho^2)
      \\&=
      f(y^n;\mu',\Sigma')\pi(\mu',\Sigma';\nu,\mu_0, \Sigma_0, \rho^2),
    \end{align*}
    where $\mu'=A^{-1}\mu+\mu_0$ and $\Sigma'=A^{-1}\Sigma (A^\top)^{-1}$.
    Therefore, since the transformation from $(\mu,\Sigma)$ to $(\mu', \Sigma')$ is bijective, we have
    \begin{align*}
      \model'^n=\myset{
        f(y^n;\mu',\Sigma')\pi(\mu',\Sigma';\nu,\mu_0, \Sigma_0, \rho^2)
      }{\mu'\in\RR^m,\Sigma'\in\RR^{m\times m},\Sigma'>0} = \model^n
    \end{align*}
    Then it follows that $C(\model^n)=C(\model'^n)=\Ccal(n,m,\nu,\sigma^2, \rho^2) =\Ccal(n,m,\nu,|\Sigma_0|^{1/m}, \rho^2)$.
\end{proof}

Note that the result of the last theorem can be applied to calculate the conditional NML (CNML)
for the multivariate normal distributions,
since $\pi(\mu,\Sigma;\nu,\mu_0,\Sigma_0,1)$ is proportional to the conjugate prior of the normal distributions.

As an immediate result of Theorem~\ref{thm:LNML_location_scale},
we have the sequential decomposition of the LNML.
\begin{corollary}[Sequential decomposition]
  \label{col:decomposition}
  The LNML distribution given in Theorem~\ref{thm:LNML_location_scale} is decomposed as
  \begin{align*}
    \bar p_n(x^n) 
    &= \prod_{i=1}^n t_{\nu-m+i}\rbr{x_i \;\middle|\; \bar\mu_{i-1},\frac{(\rho^2\nu+i)(\nu+i-1)}{(\rho^2\nu+i-1)(\nu-m+i)}\bar\Sigma_{i-1}},
  \end{align*}
  where $t_\nu(x|\mu,\Sigma)$ denotes the multivariate t-distribution with degree of freedom $\nu$, location $\mu$ and scale $\Sigma$.
\end{corollary}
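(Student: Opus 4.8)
The plan is to establish the decomposition as a telescoping product of the LNMLs at consecutive sample sizes. By Theorem~\ref{thm:LNML_location_scale}, the LNML at sample size $i$ has the form $\bar p_i(x^i)=K_i\,|\Sigma_0|^{\nu/2}/|\bar\Sigma_i|^{(i+\nu)/2}$, where the prefactor $K_i=\rbr{\frac{\nu^{\nu}}{\pi^{i}(i+\nu)^{i+\nu}(1+\frac{i}{\rho^2\nu})}}^{m/2}\frac{\Gamma_m(\frac{i+\nu}2)}{\Gamma_m(\frac{\nu}2)}$ is constant and the data enter only through $|\bar\Sigma_i|$. Since $\bar\mu_k,\bar\Sigma_k$ are the same functions of $x^k$ for every $k$, I would write $\bar p_n(x^n)=\prod_{i=1}^n \bar p_i(x^i)/\bar p_{i-1}(x^{i-1})$ with the convention $\bar p_0\equiv 1$; this is consistent because $K_0=1$ and $\bar\Sigma_0=\Sigma_0$, whence $\bar p_0=|\Sigma_0|^{\nu/2}/|\Sigma_0|^{\nu/2}=1$. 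It then suffices to show each one-step ratio equals $t_{\nu-m+i}\rbr{x_i\mid \bar\mu_{i-1},S_i}$ with $S_i=\frac{(\rho^2\nu+i)(\nu+i-1)}{(\rho^2\nu+i-1)(\nu-m+i)}\bar\Sigma_{i-1}$.

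First I would invoke the recursion for $\bar\Sigma_i$ from the proof of Theorem~\ref{thm:LNML_basic} together with the matrix determinant lemma to factor $|\bar\Sigma_i|=\rbr{\frac{\nu+i-1}{\nu+i}}^m|\bar\Sigma_{i-1}|\,(1+c_{i-1}q_i)$, where $q_i=(x_i-\bar\mu_{i-1})^\top\bar\Sigma_{i-1}^{-1}(x_i-\bar\mu_{i-1})$ and $c_{i-1}=\frac{\rho^2\nu+i-1}{(\nu+i-1)(\rho^2\nu+i)}$. Substituting this into $\bar p_i/\bar p_{i-1}$ (the $|\Sigma_0|^{\nu/2}$ factors cancelling) produces the factor $(1+c_{i-1}q_i)^{-(\nu+i)/2}$, and setting the degrees of freedom to $\nu'=\nu-m+i$ yields exactly the exponent $-(\nu'+m)/2$ of a multivariate $t$-density. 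Matching the quadratic form via $\frac{1}{\nu'}(x_i-\bar\mu_{i-1})^\top S_i^{-1}(x_i-\bar\mu_{i-1})=c_{i-1}q_i$ forces $S_i=\frac{1}{\nu'c_{i-1}}\bar\Sigma_{i-1}$, which is precisely the scale in the statement.

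It remains to check that the leftover constant equals the $t$-normalizer $\Gamma(\frac{\nu'+m}2)\big/\big(\Gamma(\frac{\nu'}2)(\nu'\pi)^{m/2}|S_i|^{1/2}\big)$. The crucial simplification is the collapse of the multivariate gamma ratio: from $\Gamma_m(z)=\pi^{m(m-1)/4}\prod_{j=0}^{m-1}\Gamma(z-\frac j2)$ all but the extreme factors telescope, so $\Gamma_m(\frac{\nu+i}2)/\Gamma_m(\frac{\nu+i-1}2)=\Gamma(\frac{\nu+i}2)/\Gamma(\frac{\nu-m+i}2)=\Gamma(\frac{\nu'+m}2)/\Gamma(\frac{\nu'}2)$, exactly the gamma ratio required. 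The surviving powers of $\pi$, of $(\nu+i)$ and $(\nu+i-1)$, and the factor $1+\frac{i}{\rho^2\nu}$ coming from $K_i/K_{i-1}$ must then collapse to $c_{i-1}^{m/2}$; I would verify this by raising the residual scalar identity to the power $2/m$, whereupon $\rbr{\frac{\nu+i}{\nu+i-1}}^{\nu+i}\frac{(\nu+i-1)^{\nu+i-1}}{(\nu+i)^{\nu+i}}$ collapses to $\frac{1}{\nu+i-1}$ and $\frac{1+(i-1)/(\rho^2\nu)}{1+i/(\rho^2\nu)}$ becomes $\frac{\rho^2\nu+i-1}{\rho^2\nu+i}$, so the product reproduces $c_{i-1}$ exactly.

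The main obstacle is precisely this constant bookkeeping: keeping the several one-step changes hidden in $K_i/K_{i-1}$ --- the $\pi^{mi/2}$, the $(\nu+i)^{m(\nu+i)/2}$, the $(1+\frac{i}{\rho^2\nu})^{m/2}$, and the multivariate-gamma ratio --- aligned with the $t$-density normalizer so that they conspire to yield the single coefficient $c_{i-1}$ delivered by the determinant lemma. Once that one scalar identity is confirmed, the telescoping product collapses to the claimed decomposition.
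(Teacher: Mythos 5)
Your proof is correct and follows essentially the same route as the paper, which states the corollary as an immediate consequence of Theorem~\ref{thm:LNML_location_scale}: telescoping $\bar p_n=\prod_{i=1}^n \bar p_i/\bar p_{i-1}$ with $\bar p_0=1$, factoring $|\bar\Sigma_i|$ via the recursion and matrix determinant lemma already used in the proof of Theorem~\ref{thm:LNML_basic}, and collapsing the ratio $\Gamma_m(\frac{\nu+i}{2})/\Gamma_m(\frac{\nu+i-1}{2})$ to $\Gamma(\frac{\nu+i}{2})/\Gamma(\frac{\nu-m+i}{2})$. Your constant bookkeeping checks out, including the scale $S_i=\frac{1}{(\nu-m+i)c_{i-1}}\bar\Sigma_{i-1}$ and the scalar identity reducing the residual factors to $c_{i-1}$.
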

Note that the decomposition given by Corollary~\ref{col:decomposition} is independent of the number of observations $n$.
Therefore, it defines an exchangeable stochastic process over positive integers.

\section{Discussion}
The hyperparameters $(\nu, \mu_0, \Sigma_0, \rho^2)$ can be chosen by the prior knowledge or by the nature of given data.
In either way, it does not matter to the asymptotic growth rate of the capacity
since the effect of luckiness function is no more than constant.
However, smaller $\nu$, e.g., $\nu=m$, is preferable if there is no specific reason,
because larger $\nu$ makes $\pi$ pointy and the associated regret $R_\pi$ more distorted.
On the other hand, $\Sigma_0$ should be smaller than the expected scale of the data distribution as it softly bounds the covariance estimate $\bar{\Sigma}_n$ from below in terms of eigenvalues.
As for the other hyperparameters, $\mu_0$ and $\rho^2$, we recommend to choose them such that the possible location of data distribution is included in the ball
$\myset{\mu\in\RR^m}{(\mu-\mu_0)^\top \Sigma_0^{-1}(\mu-\mu_0)\le \frac1{\nu\rho^2}}$.
Hence, for instance, a reasonable choice of luckiness is
$\pi(\mu, \Sigma; m, \sigma^2, \frac{\sigma^2}{m R^2})$ given by \eqref{eq:luckiness},
where $\sigma^2$ is an approximate lower bound of the minimum eigenvalue of the true covariance and $R$ is an approximate upper bound of the maximum norm of the true mean.
It remains for future work
to extend the result to other classes of distributions such as exponential families of distributions.

\bibliographystyle{IEEEtran}
\bibliography{sample}

\end{document}